\definecolor{purple}{rgb}{.9,0,.9}
\let\orgdescriptionlabel\descriptionlabel
\renewcommand*{\descriptionlabel}[1]{%
  \let\orglabel\label
  \let\label\@gobble
  \phantomsection
  \edef\@currentlabel{#1}%
  \let\label\orglabel
  \orgdescriptionlabel{#1}%
}
\newcommand{\Real}{\mathbb{R}}
\newcommand{\rfa}{\quad {\rm for \ all}\ }
\newcommand{\cA}{{\cal A}}\newcommand{\cB}{{\cal B}}
\newcommand{\cE}{{\cal E}}
\newcommand{\cH}{{\cal H}}
\newcommand{\cL}{{\cal L}}
\newcommand{\cM}{{\cal M}}\newcommand{\cN}{{\cal N}}\newcommand{\cO}{{\cal O}}
\newcommand{\cR}{{\cal R}}
\newcommand{\cS}{{\cal S}}\newcommand{\cT}{{\cal T}}
\newcommand{\cW}{{\cal W}}
\newcommand{\ba}{{\bf a}}
\newcommand{\be}{{\bf e}}
\newcommand{\bn}{{\bf n}}
\newcommand{\bt}{{\bf t}}
\newcommand{\bv}{{\bf v}}\newcommand{\bw}{{\bf w}}
\newcommand{\bA}{{\bf A}}
\newcommand{\bB}{{\bf B}}\newcommand{\bC}{{\bf C}}\newcommand{\bD}{{\bf D}}
\newcommand{\bJ}{{\bf J}}
\newcommand{\bP}{{\bf P}}
\newcommand{\bnu}{\boldsymbol{\nu}}
\newtheorem{theorem}{Theorem}[section]
\newtheorem{lemma}[theorem]{Lemma}
\newcommand\Tstrut{\rule{0pt}{2.6ex}}         
\newfont{\ssmsam}{msam5 scaled 750}
\newcommand{\bbox}{\mbox{\ssmsam\char'03}}
\newcommand{\zzbox}{\raisebox{-0.25ex}{$\bbox$}}
\newcommand{\zbox}[1]{\overset{\zzbox}{#1}}
\newcommand{\beqn}{\begin{equation}}
\newcommand{\eeqn}{\end{equation}}
\newcommand{\bzero}{{\bf 0}}
\newcommand{\da}{\,\text{da}}
\newcommand{\dv}{\,\text{dv}}
\newcommand{\dl}{\,\text{dl}}
\newcommand{\trans}{{\scriptscriptstyle\mskip-1mu\top\mskip-2mu}}
\title{A transport theorem for nonconvecting open sets\\ on an embedded manifold}
\author{Brian Seguin}
\begin{document}

\maketitle


\begin{abstract}
Most transport theorems---that is, a formula for the rate of change of an integral in which both the integrand and domain of integration depend on time---involve domains that evolve according to a flow map.  Such domains are said to be convecting.  Here a transport theorem for nonconvecting domains evolving on an embedded manifold is established.  While the domain is not convecting, it is assumed that the boundary of the domain does evolve according to a flow map is some generalized sense. The proof relies on considering the evolving set as a fixed set in one higher dimension and then using the divergence theorem.  The domains considered can be irregular in the sense that their boundaries need only be Lipschitz.  Tools from geometric measure theory are used to deal with this irregularity.
\end{abstract}

\section{Introduction}

One often encounters the problem of having to find a formula for the derivative of an integral with respect to a parameter in which both the integrand and the domain of integration depend on the parameter.  Such problems occur in continuum physics when trying to convert a global balance law to a local one \cite{GFA} or in shape optimization problems when computing the variation of a functional \cite{BB,Walker}.  The simplest case of this kind of result is the Leibniz integral rule, which states that if $\phi:\Real^2\rightarrow\Real$ and $a,b:\Real\rightarrow\Real$ are sufficiently smooth functions, then
\beqn
\frac{d}{dt}\int_{a(t)}^{b(t)} \phi(x,t)\, dx = \int_{a(t)}^{b(t)} \frac{\partial}{\partial t}\phi(x,t)\, dx+ \phi(b(t),t)b'(t)-\phi(a(t),t)a'(t).
\eeqn
In higher dimensions, such a formula is referred to as a transport theorem.  This is because the analogous result in three dimensions is called Reynolds' transport theorem, which involves an open, bounded set $\cR$ in $\Real^3$ with smooth boundary that is evolving according to a given flow map with velocity field $\bv$.
In this case, given a smooth function $\phi$ depending on position and time, one has 
\beqn\label{RTT}
\frac{d}{dt}\int_{\cR}\phi \dv = \int_{\cR} \frac{\partial}{\partial t}\phi \dv + \int_{\partial\cR}\phi \bv\cdot\bn \da,
\eeqn
where $\bn$ is the exterior unit-normal to $\cR$.  A domain evolving according to a flow map is called a convecting domain.

Extensions of the classical transport theorems of Leibniz and Reynolds have been established by considering different types of evolving domains.  A particularly important example of this being the surface transport theorem \cite{PFG,FT,GSW,Lidstrom}.  Here, one considers a convecting surface $\cS$ in three dimensions.  The resulting formula for the time derivative is
\beqn\label{STT}
\frac{d}{dt}\int_{\cS}\phi\da = \int_{\cS}(\zbox{\phi} - 2H\phi \bv\cdot\bn)\da + \int_{\partial\cS} \phi \bv\cdot\bnu\dl,
\eeqn
where $\zbox{\phi}$ is what is known as the normal time-derivative of $\phi$ and it measures how quickly $\phi$ is changing in the direction orthogonal to the surface, $H$ is the mean curvature of $\cS$, and $\bnu$ is the unit binormal that is tangent to $\cS$ but orthogonal to $\partial \cS$.  More generally, a transport theorem for a $k$-dimensional manifold convecting in an $n$-dimensional semi-Riemannian manifold was established by Betounes \cite{Betounes}.  The formula for the derivative in this case is similar to what appears in \eqref{STT} with the term involving the mean curvature being replaced by one that contains the mean-curvature vector.

The transport theorems mentioned so far hold for evolving smooth domains, but results for irregular domains have been obtained.  Falach and Segev established generalized transport theorems by modeling the domain of integration either as a de Rham current \cite{FS1} or a flat chain \cite{FS2} in the spirit of Federer's geometric measure theory \cite{Fed}.  In both cases, the domain was convecting according to a given flow map, though in \cite{FS2} this map was only required to be Lipschitz.  Seguin and Fried \cite{SF} proved a transport theorem involving irregular domains using Harrison's theory \cite{Harrison} of differential chains.  Besides allowing for irregular evolving domains, this transport theorem holds for domains that need not be convecting.  This allows for the domain to develop holes or transition from smooth to fractal during the course of its evolution.  For a not-so-technical description of this result, see \cite{SHF}.  


Of all of the previously mentioned results on transport theorems, only two did not involve convecting domains: the result of Seguin and Fried \cite{SF} and Gurtin, Struthers, and Williams' \cite{GSW} proof of the surface transport theorem.  The first of these has the drawback that showing a particular evolving domain can be modeled as a time-dependent differential chain with the properties necessary to apply the generalized transport theorem can be challenging in practice, while the second only holds for smooth surfaces.  Another method of proving a transport theorem for nonconvecting domains was provided by Estrada and Kanwal \cite{EK} using the theory of distributions, but their result involved smooth domains.  When considering a nonconvecting domain, there is no flow map, and hence no associated velocity field $\bv$.  In this case, the transport theorems take a slightly different form.  For example, in the Reynolds' transport theorem \eqref{RTT} for a nonconvecting domain the term $\bv\cdot\bn$ is replaced by a scalar normal velocity which describes the evolution of the boundary of $\cR$.  

Here we establish a transport theorem involving an evolving open set that lives on a fixed manifold embedded in a Euclidean space that is not convecting and can have certain types of irregularities.  Since the manifold on which the set is evolving is stationary, this transport theorem will take a form similar to \eqref{RTT}.  For a simple example of the type of domain that will be considered, look at the evolving domain shown in Figure~\ref{ExMD}.  The domain initially occupies the region in $\Real^2$ shown in Figure~\ref{ExMD}(a), with the arrows indicating the velocity of the boundary of the domain.  Notice that the boundary of this domain is initially not smooth for two reasons: it has corners and it intersects itself.  However, during the course of its evolution, the domain changes so that the bounding curve no longer intersects itself, as seen in Figure~\ref{ExMD}(b).  Such an evolution cannot be described by a convecting domain.

\begin{figure}
\centering
\includegraphics[width=2.5in]{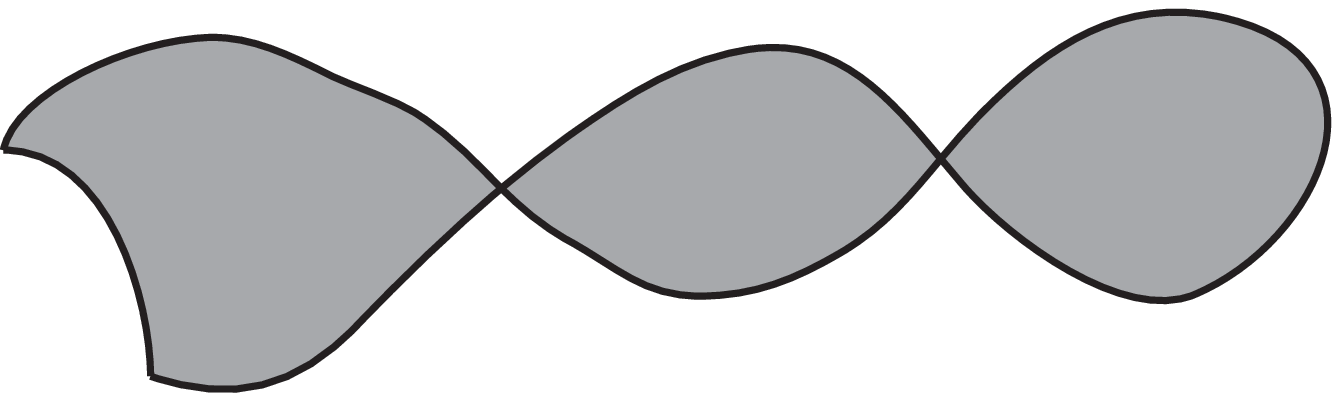}
\hspace{.1in}
\includegraphics[width=2.5in]{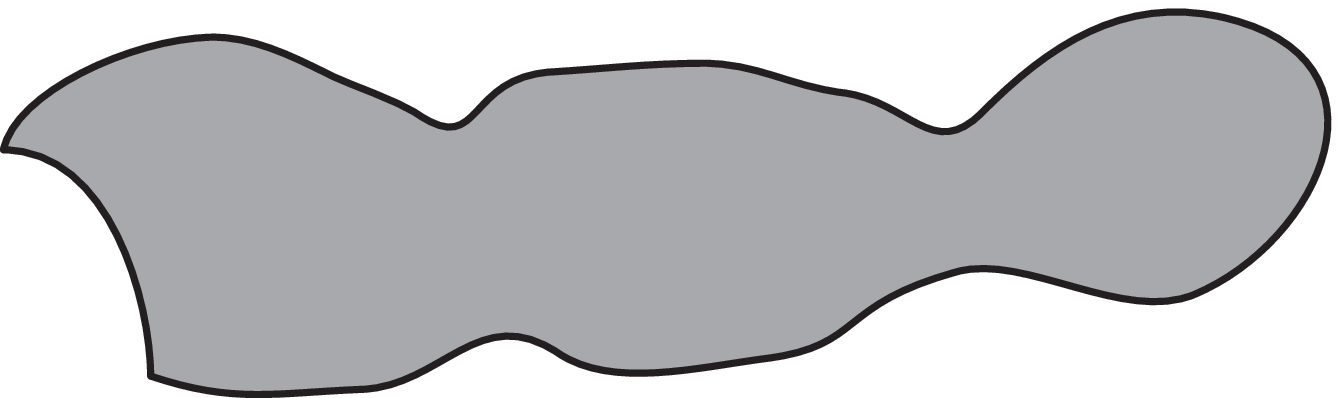}
\thicklines
\put(-315,15){\rotatebox[origin=c]{-50}{$\vector(1,0){12}$}}
\put(-290,31){\rotatebox[origin=c]{-66}{$\vector(1,0){14}$}}
\put(-270.5,31){\rotatebox[origin=c]{-128}{$\vector(1,0){14}$}}
\put(-250,13){\rotatebox[origin=c]{-127}{$\vector(1,0){12}$}}
\put(-295,17){\rotatebox[origin=c]{66}{$\vector(1,0){14}$}}
\put(-276,19){\rotatebox[origin=c]{119}{$\vector(1,0){14}$}}
\put(-280,-10){$(a)$}
\put(-100,-10){$(b)$}
\caption{A simple example of the type of evolving domain allowed in the transport theorem presented here.  During the course of the evolution of the domain, the boundary can transition from intersecting itself, as shown in (a), to not intersecting itself, as seen in (b).}
\label{ExMD}
\end{figure}

The proof of the desired transport theorem uses the ideas of Gurtin, Struthers, and Williams \cite{GSW} in that the evolving domain is considered as a fixed domain in one higher dimension, which can be viewed as space-time.  Moreover, the divergence theorem plays a key role in establishing the result.  Tools from geometric measure theory are used to allow for the consideration of irregular domains.  While the domain is not assumed to be convecting, in some sense the boundary of it is.  

The main motivation for establishing this particular transport theorem is to compute the first variation of a newly introduced fractional length functional.  
The condition of the first variation being zero can be used to motivate a nonlocal notion of curvature for a curve.  This is analogous to how Caffarelli, Roquejoffre, and Savin \cite{CRS10} computed the Euler--Lagrange equation of the fractional perimeter and Abatangelo and Valdinoci \cite{AV14} used it to define a nonlocal mean curvature.  For the work in which this transport theorem is applied to obtain a nonlocal notion of curvature for a curve, see Seguin~\cite{NLL}.  There are other problems in shape optimization theory that do not always involve regular domains, see the work of Bucur and Buttazzo \cite{BB}, and, when considering a perturbation of the domain to compute the first variation, one does not always have a flow map that describes this.  This is exactly the situation that occurred when computing the first variation of the fractional length functional in \cite{NLL}.  Irregular domains are also important in the physical sciences.  See, for example, the works of Marzocchi \cite{27}, Degiovanni, Marzocchi, and Musesti \cite{8}, \u{S}ilhav\'y,\cite{33} and Schuricht \cite{32} one end goal of which is to formulate balance laws for irregular domains. The formulation of such balance laws requires the ability to integrate over irregular domains. Moreover, the balance laws of interest typically include a term involving a time rate of change of an integral in which both the domain and integrand may vary with time, and thus a transport theorem for irregular domains is needed.  Thus, the result established here could have applications outside of its original motivation.

The next section introduces a number of definitions and makes precise the type of evolving domains that will be considered here.  Section~\ref{sectgeo} investigates the geometry of the evolving domains.  In particular, the time-dependent domain will be considered as a fixed domain in one higher dimension and the exterior unit normal to this domain will be determined.  The last section, Section~\ref{sectTT}, contains the proof of the desired transport theorem.

\section{Regularly evolving sets}

We say that a subset $\cS$ of $\Real^d$ is a $k$-dimensional immersed submanifold if it is the range of a function $f\in W^{1,\infty}(\cN,\Real^d)$, where $\cN$ is a $k$-dimensional Riemannian manifold, whose differential $df$ is injective where it exists, which is $\cH^{k}$-a.e..  Such a function $f$ is called an immersion\footnote{Usually an immersion is a smooth function, but here we slighty abuse terminology and apply this term when the function has less regularity.}.  
Let $W^{1,\infty}_\text{im}(\cN,\Real^d)$ denote the space of all immersions in $W^{1,\infty}(\cN,\Real^d)$.  If $f$ is also injective, then $\cS$ is an embedded submanifold of $\Real^d$.  We call the set $\cS$ an \emph{$\cH^k$-embedded submanifold} of $\Real^d$ if it is an immersed submanifold of dimension $k$ and the immersion $f\in W^{1,\infty}_\text{im}(\cN,\Real^d)$ satisfies
\beqn
\cH^k(\{p\in\cS\ |\ \cH^0(f^{-1}(\{p\}))>1\})=0.
\eeqn
This condition says that the set of points of $\cS$ where the image of $\cN$ under $f$ intersects itself has dimension less than $k$.  See Figure~\ref{Haem}.  

We say that a subset $\cA$ is an \emph{$\cH^k$-almost embedded submanifold} if there exists an $\cH^k$-embedded submanifold $\cS$ such that $\cA$ and $\cS$ differ by a set of $\cH^k$-measure zero.  
See Figure~\ref{Haem}. If this is the case, we write
\beqn
\cA\stackrel{\cH^{k}}{=}\cS.
\eeqn
Notice that $\cA$ and $\cS$ have the same tangent space at $\cH^k$-a.e.~point.

\begin{figure}
\centering
\includegraphics[width=3.5in]{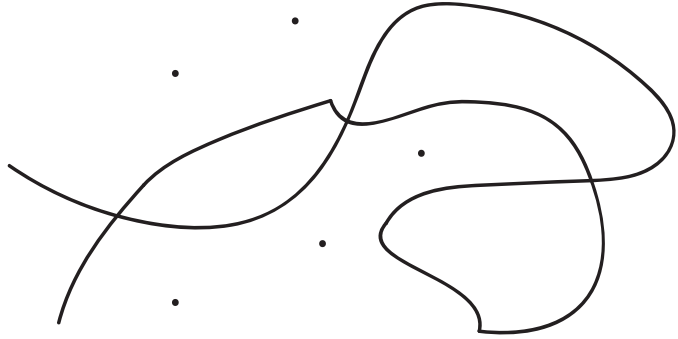}
\thicklines
\caption{The set consisting of just the depicted self-intersecting curve is a $\cH^1$-embedded submanifold of $\Real^2$ since the set is the immersed image of a 1-dimensional Riemannian manifold and the set of intersection points is finite, and hence has $\cH^1$-measure zero.  The set consisting of the self-intersecting curve together with the isolated points represented by dots in the figure is a $\cH^1$-almost embedded submanifold because the isolated points have $\cH^1$-measure zero.}
\label{Haem}
\end{figure}

Let $\cM$ an $m$-dimensional embedded submanifold of $\Real^d$.  For each $t\in \Real$, let $\cO_t$ be an open subset of $\cM$ whose reduced boundary $\partial^* \cO_t$ is an $\cH^{m-1}$-almost embedded submanifold.\footnote{For a definition of reduced boundary see, for example, Ambrosio, Fusco, and Pallara \cite{AFP}.}  Furthermore, assume that each $\partial^* \cO_t$ is related to the image of the same manifold smoothly in time.  More precisely, assume there is a $(m-1)$-dimensional Riemannian manifold $\cN$ and a function $f\in C^1(\Real,W^{1,\infty}_\text{im}(\cN,\Real^d))$ such that
\beqn\label{fprop}
\partial^* \cO_t\stackrel{\cH^{m-1}}{=}f_t(\cN)\quad \text{and}\quad \cH^{m-1}(\{p\in\partial^*\cO_t\ |\ \cH^0(f^{-1}_t(\{p\}))>1\})=0,
\eeqn
where $f_t:=f(t,\cdot)$.  We refer to a family of open sets $\cO_t$ of $\cM$ as a \emph{regularly evolving open set} in $\cM$ if there is a Riemannian manifold $\cN$ and a function $f\in C^1(\Real,W^{1,\infty}_\text{im}(\cN,\Real^d))$ that satisfy \eqref{fprop}.  For simplicity, we refer to $\cO$ as a regularly evolving open set with the understanding that this consists of a collection of open sets indexed by $t$, which we think of as time.

Given a regularly evolving open set $\cO$, the velocity $\bv$ associated with $f$ is defined at $\cH^{m-1}$-a.e.~$p\in\partial^*\cO_t$ by
\beqn\label{vel}
\bv(t,p):=f'(t,f^{-1}_t(p))
\eeqn
for all $t\in\Real$, where the prime denotes partial differentiation with respect to time.  This velocity is not unique as it depends upon $\cN$ and $f$.  However, if $\bn$ denotes the exterior unit-normal to $\cO$ relative to $\cM$, which is defined at $\cH^{n-1}$-a.e.~point of $\partial^*\cO_t$ for all $t\in\Real$, then $V_{\partial}:=\bv\cdot \bn$ is unique $\cH^{n-1}$-a.e..  To see this, consider another $(m-1)$-dimensional Riemannian manifold $\cL$ and a function $g\in C^1(\Real,W^{1,\infty}_\text{im}(\cL,\Real^d))$, such that
\beqn\label{gprop}
\partial^* \cO_t\stackrel{\cH^{m-1}}{=}g_t(\cN)\quad \text{and}\quad \cH^{k-1}(\{p\in\partial^*\cO_t\ |\ \cH^0(g^{-1}_t(\{p\}))>1\})=0.
\eeqn
Given $t\in\Real$ and $p\in\partial^*\cO_t$ such that $f^{-1}_t(p)$ and $g_t^{-1}(p)$ exist, set $z:=g_t^{-1}(p)$ and define $h(t,z):=f^{-1}_t(g_t(z))$.  Notice that this function satisfies
\beqn
g(t,z)=f(t,h(t,z)).
\eeqn
Differentiating this relation with respect to $t$, it follows that the velocity $\bw$ associated with $g$ is related to the velocity $\bv$ associated with $f$ (see \eqref{vel}) by
\beqn
\bw(t,p)=df_t(f^{-1}_t(p)) h'(t,g_t^{-1}(p)) + \bv(t,p).
\eeqn
Since $d f_t(f^{-1}_t(p)) h'(t,g_t^{-1}(p))$ is tangent to $\partial^*\cO_t$, it follows that $\bw\cdot\bn=\bv\cdot\bn=V_{\partial}$, where these quantities are defined.  Since this normal velocity is uniquely defined for a regularly evolving open set, we will say that $\cO$ is a regularly evolving open set in $\cM$ with boundary velocity $V_{\partial}$.

\section{Geometry of regularly evolving sets}\label{sectgeo}

Let $\cO$ be a regularly evolving open set in $\cM$.  Given $t_\circ,t\in \Real$, consider the set
\beqn
\cW:=\{(s,x)\in\Real\times\cM\ |\ s\in(t_\circ,t),\ x\in\cO_s\},
\eeqn
which is an open set in $\cE:=\Real\times\cM$, and hence is a $(m+1)$-dimensional submanifold of $\Real^{d+1}$.  Moreover, the reduced boundary of $\cW$ is given by
\beqn
\partial^* \cW=\cB\cup\cS\cup\cT,
\eeqn
where
\beqn
\cB:=\{t_\circ\}\times\cO_{t_\circ},\quad \cS:=\bigcup_{s\in (t_\circ,t)} \{s\}\times \partial^*\cO_s,\quad\cT:=\{t\}\times \cO_{t}.
\eeqn
One can roughly view $\cB$, $\cS$, and $\cT$ as the bottom, side, and top of $\cW$, respectively.  See Figure~\ref{depicW}.  While it is clear that $\cB$ and $\cT$ are $m$-dimensional manifolds, the fact that $\cS$ is an $\cH^m$-almost embedded submanifold of $\Real^{d+1}$ is not as obvious.  To see that this is the case, consider $f$ and $\cN$ such that \eqref{fprop} holds and define $F:(t_\circ,t)\times\cN\rightarrow\Real^{d+1}$ by
\beqn
F(s,z):=(s,f(s,z))\rfa (s,z)\in(t_\circ,t)\times\cN.
\eeqn
It follows from the properties of $f$ that $F\in W^{1,\infty}_\text{im}((t_\circ,t)\times\cN,\Real^{d+1})$, $\cS\stackrel{\cH^m}{=}g((t_\circ,t)\times\cN)$, and 
\beqn
\cH^{m}(\{p\in\cS\ |\ \cH^0(F^{-1}(\{(t,p)\}))>1\})=0.
\eeqn

The exterior unit-normal to $\cW$ is described in the next result.

\begin{lemma}\label{lemw}
The exterior unit-normal to $\cW$, which exists at $\cH^m$-a.e.~point of $\partial^*\cW$, is given by
\beqn\label{normalW}
\bw:=
\begin{cases}
-\bt & \text{on}\ \cB,\\
\frac{\bn-V_{\partial}\bt}{\sqrt{1+V_{\partial}^2}} & \text{on}\ \cS,\\
\bt & \text{on}\ \cT,
\end{cases}
\eeqn
where $\bt=(t,\bzero)\in\Real\times\Real^d$ is the purely time-like vector, $\bn$ is the exterior unit-normal to $\partial^* \cO$, and $V_\partial$ is the boundary velocity.
\end{lemma}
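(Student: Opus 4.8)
The plan is to compute $\bw$ pointwise, at $\cH^m$-almost every point of $\partial^*\cW$, as the unique outward-pointing unit vector in the tangent space $T_{(s,x)}\cE=\Real\times T_x\cM$ that is orthogonal to the tangent space of the relevant boundary piece. Since $\cW$ is open in the $(m+1)$-dimensional manifold $\cE$, its exterior normal is naturally a vector in $T\cE$, and orthogonality is measured with respect to the inner product that $T\cE$ inherits from $\Real^{d+1}$ (for which the time-like direction $\bt$ and the spatial slice are orthogonal). For each of the three pieces $\cB$, $\cS$, and $\cT$ I would first identify the tangent space of the piece, then take its one-dimensional orthogonal complement inside $T_{(s,x)}\cE$, normalize, and finally fix the sign by requiring the vector to point out of $\cW$.

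For the bottom $\cB=\{t_\circ\}\times\cO_{t_\circ}$ and the top $\cT=\{t\}\times\cO_t$ the computation is immediate. Because $\cO_{t_\circ}$ and $\cO_t$ are open in $\cM$, the tangent space of each piece at $(t_\circ,x)$ or $(t,x)$ is the full spatial slice $\{0\}\times T_x\cM$, whose orthogonal complement in $\Real\times T_x\cM$ is spanned by $\bt$, so $\bw=\pm\bt$. Since $\cW$ consists of points with $s\in(t_\circ,t)$, the outward direction decreases $s$ on $\cB$ and increases $s$ on $\cT$, giving $\bw=-\bt$ on $\cB$ and $\bw=\bt$ on $\cT$.

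The substantive case is the side $\cS$. Using the map $F(s,z)=(s,f(s,z))$ introduced above, at a point $(s,p)$ with $p=f_s(z)\in\partial^*\cO_s$ the tangent space of $\cS$ is spanned by $\partial_sF=(1,\bv)$, where $\bv$ is the velocity from \eqref{vel}, together with the vectors $(0,\eta)$ for $\eta\in T_p(\partial^*\cO_s)$. Writing a candidate normal as $\bw=(a,\bu)$ with $a\in\Real$ and $\bu\in T_p\cM$, orthogonality to every $(0,\eta)$ forces $\bu$ to be orthogonal to $T_p(\partial^*\cO_s)$ within $T_p\cM$; as $\partial^*\cO_s$ has codimension one in $\cM$, this gives $\bu=b\,\bn$ for a scalar $b$, with $\bn$ the exterior unit-normal to $\cO_s$. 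Orthogonality to $(1,\bv)$ then yields $a+b\,\bv\cdot\bn=a+bV_{\partial}=0$, so $a=-bV_{\partial}$ and $\bw=b(\bn-V_{\partial}\bt)$. Imposing $|\bw|=1$ and using $|\bn|=1$ gives $b=\pm(1+V_{\partial}^2)^{-1/2}$, and choosing the sign so that the spatial part of $\bw$ agrees with the exterior normal $\bn$ (the outward direction at fixed time) selects $b>0$ and produces the claimed formula.

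I expect the main obstacle to be the measure-theoretic bookkeeping rather than the linear algebra. One must justify that, at $\cH^m$-almost every point of $\cS$, the tangent space obtained by differentiating $F$ coincides with the approximate tangent space of the reduced boundary, so that the pointwise computation above really yields the reduced-boundary normal; this rests on the fact, established before the lemma, that $\cS$ is an $\cH^m$-almost embedded submanifold and that $\bn$ and $V_{\partial}$ are defined $\cH^{m-1}$-a.e.\ on each slice $\partial^*\cO_s$. A secondary point is to confirm that $\bv\in T_p\cM$ --- which holds because each curve $s\mapsto f(s,z)$ stays on $\cM$ --- so that the whole calculation takes place inside $T\cE$ and the splitting of $\bw$ into time-like and spatial parts is legitimate. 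Finally, the outward orientation must be checked consistently across the three pieces.
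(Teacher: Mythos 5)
Your proposal is correct and follows essentially the same route as the paper: identify the tangent space of each boundary piece ($\cB$, $\cS$, $\cT$), take its orthogonal complement in $T\cE$, normalize, and fix the sign by the outward condition $\bw\cdot\bn>0$. The paper states this more tersely as a list of characterizing properties of $\bw$ on $\cS$ (using the tangent vector $V_{\partial}\bn+\bt$ in place of your $(1,\bv)$, which is equivalent modulo a vector tangent to $\partial^*\cO_s$), whereas you solve the resulting linear system explicitly; the content is the same.
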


\begin{proof}
The form of the exterior unit-normal $\bw$ on $\cB$ and $\cT$ is clear from their definitions.  The form of $\bw$ on $\cS$ is characterized by the following properties:
\begin{center}
$\bw-(\bw\cdot\bt)\bt$ is tangent to $\cW$ and orthogonal to $\partial^*\cO$,\\
$\bw\cdot(V_{\partial}\bn+\bt)=0$ and $\bw\cdot\bn>0$,\\
$|\bw|=1$.
\end{center}
The vector $\bw-(\bw\cdot\bt)\bt$ is the projection of $\bw$ onto the plane orthogonal to $\bt$.  To ensure that $\bw$ is orthogonal to $\partial^*\cW$, $\bw-(\bw\cdot\bt)\bt$ must be tangent to $\cW$ and orthogonal to $\partial^*\cO$.  The vector $V_{\partial}\bn+\bt$ is tangent to $\cS$, and so $\bw$ must be orthogonal to it.  The condition $\bw\cdot\bn>0$ ensures that $\bw$ is an exterior normal, as apposed to a interior normal.  The final condition $|\bw|=1$ guarantees that $\bw$ is a unit vector.  The only vector that satisfies all these conditions is that specified in \eqref{normalW}$_2$.
\end{proof}

\begin{figure}
\centering
\includegraphics[width=5in]{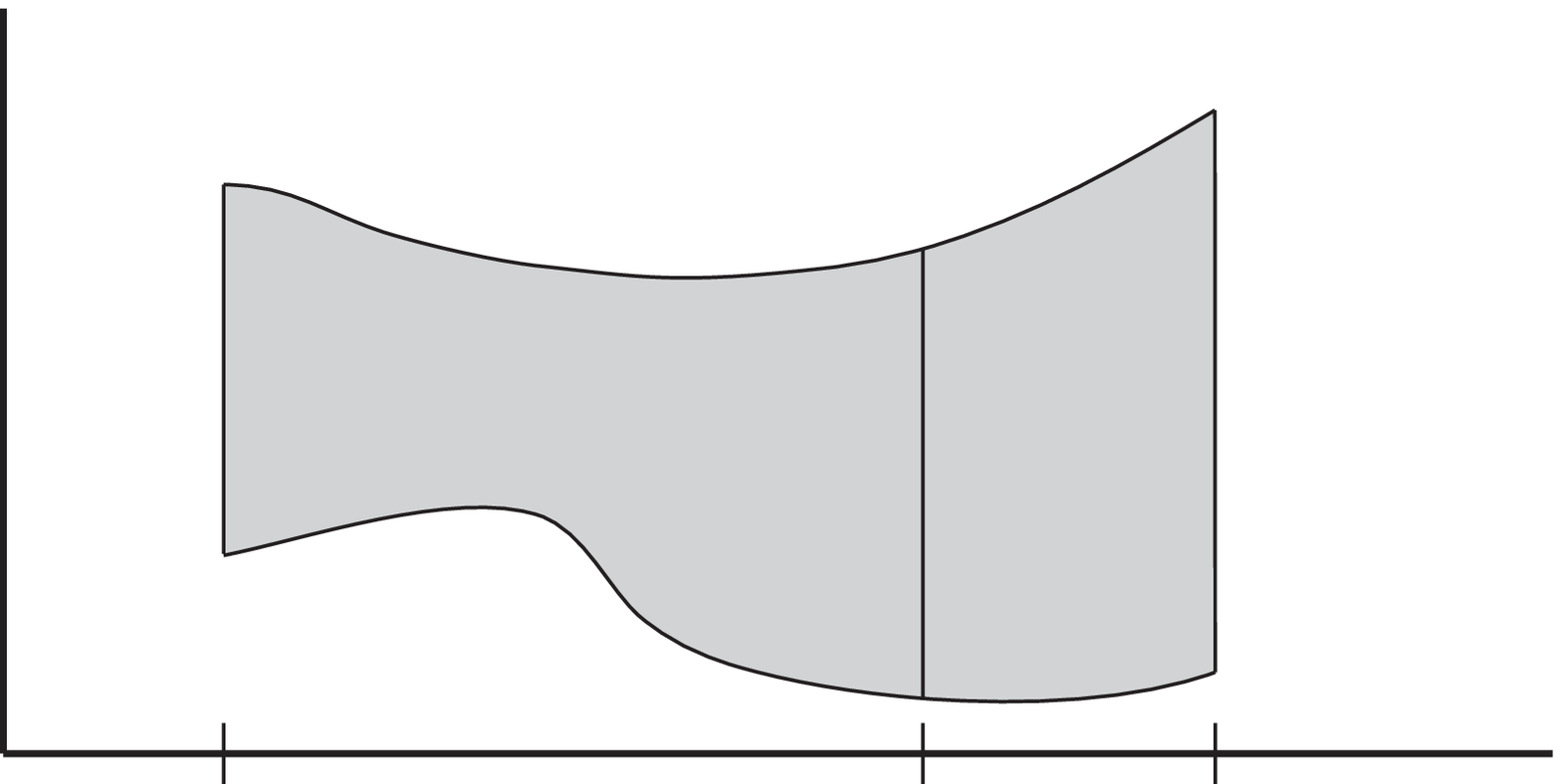}
\thicklines
\put(-358,175){$\cM$}
\put(-320,115){$\cB$}
\put(-75,85){$\cT$}
\put(-255,50){$\cS$}
\put(-255,130){$\cS$}
\put(-70,48){$\bt$}
\put(-78,45){\rotatebox[origin=c]{0}{$\vector(1,0){25}$}}
\put(-325,65){$-\bt$}
\put(-334,65){\rotatebox[origin=c]{180}{$\vector(1,0){25}$}}
\put(-230,35){$\bn$}
\put(-220,30){\rotatebox[origin=c]{-90}{$\vector(1,0){25}$}}
\put(-312,-8){$t_\circ$}
\put(-150,-8){$s$}
\put(-145,70){$\cO_s$}
\put(-81,-8){$t$}
\put(-163.7,130.3){\rotatebox[origin=c]{108}{$\vector(1,0){25}$}}
\put(-163.7,133){$\bw$}
\put(-156.5,132.3){\rotatebox[origin=c]{90}{$\vector(1,0){23}$}}
\put(-150,150.3){$\bw-(\bw\cdot\bt)\bt$}
\put(-200,75){$\cW$}
\caption{This is a depiction of the geometry of $\cW$ in the product space $\Real\times\cM$, which shows its reduced boundary $\partial^*\cW$ consisting of $\cB$, $\cS$, and $\cT$, and its exterior unit-normal $\bw$.}
\label{depicW}
\end{figure}

The following result, which allows us to convert an integral over $\cS$ into an iterated integral, will be useful in proving the main result.

\begin{lemma}\label{lemCV}
If $\phi$ be an integrable real-valued function defined on $\cS$, then
$$\int_\cS \phi\, d\cH^{m} = \int_{t_\circ}^t\int_{\partial^* \cO_s}\phi(s,p)\sqrt{1+V_{\partial}^2(s,p)}\, d\cH^{m-1}(p) ds.$$
\end{lemma}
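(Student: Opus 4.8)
The plan is to pull both sides of the asserted identity back to the common parameter domain $(t_\circ,t)\times\cN$ by means of the area formula, thereby reducing the statement to a single pointwise comparison of Jacobians. Recall the parametrization $F(s,z)=(s,f(s,z))$ of $\cS$ introduced above; it is Lipschitz, and by \eqref{fprop} it is injective at $\cH^m$-almost every image point, so the area formula applies with multiplicity one and gives
\beqn
\int_\cS\phi\,d\cH^{m}=\int_{t_\circ}^{t}\int_\cN \phi(F(s,z))\,J_F(s,z)\,dz\,ds,
\eeqn
where $dz$ is the volume measure of $\cN$ and $J_F$ is the $m$-dimensional Jacobian of $F$ with respect to the product metric on $(t_\circ,t)\times\cN$. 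For each fixed $s$ the map $f_s$ is likewise injective $\cH^{m-1}$-a.e., so the area formula also gives $\int_{\partial^*\cO_s}\psi\,d\cH^{m-1}=\int_\cN\psi(f_s(z))\,J_{f_s}(z)\,dz$, with $J_{f_s}$ the $(m-1)$-dimensional Jacobian of $f_s$. Comparing these two expressions, the lemma will follow once we establish the pointwise identity
\beqn\label{eq:jac}
J_F(s,z)=\sqrt{1+V_{\partial}^2(s,f_s(z))}\;J_{f_s}(z),
\eeqn
valid at almost every $(s,z)$.

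To prove \eqref{eq:jac}, fix a point at which $df_s$ is injective and choose an orthonormal basis $e_1,\dots,e_{m-1}$ of $T_z\cN$. Writing $w_i:=df_s(z)e_i\in\Real^d$ and $\bv:=f'(s,z)$, the tangent space to $\cS$ at $F(s,z)$ is spanned by $(1,\bv)$ and the $(0,w_i)$, so $J_F=\sqrt{\det G}$, where the Gram matrix is
\beqn
G=\begin{pmatrix} 1+|\bv|^2 & b^{\trans}\\ b & A\end{pmatrix},\qquad A_{ij}=w_i\cdot w_j,\qquad b_i=\bv\cdot w_i.
\eeqn
Since $J_{f_s}=\sqrt{\det A}$, the Schur-complement identity $\det G=\det A\,(1+|\bv|^2-b^{\trans}A^{-1}b)$ reduces \eqref{eq:jac} to showing that $1+|\bv|^2-b^{\trans}A^{-1}b=1+V_{\partial}^2$. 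Here $b^{\trans}A^{-1}b$ is precisely the squared length of the orthogonal projection of $\bv$ onto $\mathrm{span}\{w_i\}=T_p\partial^*\cO_s$, so $|\bv|^2-b^{\trans}A^{-1}b$ is the squared length of the component of $\bv$ orthogonal to $\partial^*\cO_s$ in $\Real^d$. The decisive geometric fact is that $\bv$ is tangent to $\cM$: for fixed $z$ the path $s\mapsto f(s,z)$ lies in $\cM$, so its velocity lies in $T_p\cM$. Hence that orthogonal component lies in the one-dimensional complement of $T_p\partial^*\cO_s$ inside $T_p\cM$, which is spanned by $\bn$; its squared length is therefore $(\bv\cdot\bn)^2=V_{\partial}^2$, proving \eqref{eq:jac}.

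Substituting \eqref{eq:jac} into the first display and recognizing the inner integral over $\cN$, through the area formula for $f_s$, as an integral over $\partial^*\cO_s$ yields
\beqn
\int_\cS\phi\,d\cH^{m}=\int_{t_\circ}^{t}\int_{\partial^*\cO_s}\phi(s,p)\sqrt{1+V_{\partial}^2(s,p)}\,d\cH^{m-1}(p)\,ds,
\eeqn
which is the asserted formula; Fubini's theorem legitimizes the iterated integration since $\phi$ is integrable and the integrand is measurable. I expect the main obstacle to be the Jacobian identity \eqref{eq:jac}, and within it the equality $|\bv|^2-b^{\trans}A^{-1}b=V_{\partial}^2$, which rests on $\bv$ being tangent to $\cM$ so that $\bn$ is the only direction transverse to $\partial^*\cO_s$ available to the boundary velocity. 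A subsidiary technical point is to confirm that the paths $s\mapsto f(s,z)$ do remain in $\cM$ for a.e.~$z$, which follows from \eqref{fprop}, the $C^1$-in-time regularity of $f$, and Fubini.
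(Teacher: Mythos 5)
Your proof is correct and follows essentially the same route as the paper: pull both integrals back to $(t_\circ,t)\times\cN$ via the area formula and reduce everything to the Jacobian identity $J_F=\sqrt{1+V_{\partial}^2}\,J_{f_s}$, proved by a block-determinant computation on the Gram matrix of $dF$. The only cosmetic difference is that you take the Schur complement with respect to the spatial block $A$, which lets you read off $b^{\trans}A^{-1}b$ directly as the squared length of the tangential projection of $\bv$, whereas the paper complements the scalar time block and then invokes an extra determinant identity to reach the same quantity; both arguments hinge on the same geometric fact, which you state explicitly, that $f'$ is tangent to $\cM$ so its component normal to $\partial^*\cO_s$ is $(\bv\cdot\bn)\bn$.
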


\begin{proof}
First, find a Riemannian manifold $\cN$ and function $f\in C^1(\Real,W^{1,\infty}_\text{im}(\cN,\Real^d))$ such that \eqref{fprop} holds.  By the area formula \cite{AFP}
\beqn\label{JF-1}
\int_\cS \phi\, d \cH^m = \int_{t_\circ}^t\int_{\cN} \phi(F(s,z))J_F(s,z)\, d\cH^{m-1}(z) ds,
\eeqn
where $J_F=\sqrt{\text{det}( d F^\top d F)}$ is the Jacobian of the function $F:(t_\circ,t)\times\cN\rightarrow \cS$ defined by
\beqn\label{JF0}
F(s,z):=(s,f(s,z))\rfa (s,z)\in (t_\circ,t)\times\cN.
\eeqn
It turns out that 
\beqn
J_F(s,z)=\sqrt{1+V^2_{\partial}(s,f_s(z))}J_{f_s}(z).
\eeqn
To establish this fact, first notice that
\beqn
d F^\top d F=\begin{blockarray}{ccc}
m-1 & 1 &  \\
    \begin{block}{(c|c)c}
      d f^\trans d f & d f^\trans f'   & m-1  \\ 
      \cline{1-2}
      f'^\trans d f  & |f'|^2+1  & 1  \\ 
    \end{block}
\end{blockarray},
\eeqn
where $df$ is the differential of $f$ with respect to the space variable and $f'$ is the derivative with respect to time.  Recall that the determinant of a block matrix can be computed using
\begin{align*}
\det\begin{blockarray}{cc}
  &  \\
    \begin{block}{(c|c)}
      \bA & \bB   \\ 
      \cline{1-2}
      \bC & \bD \Tstrut \\ 
       \end{block}
\end{blockarray}&=\det(\bD)\det(\bA-\bB\bD^{-1}\bC).
\end{align*}
Using this identity, we find that
\beqn\label{JF1}
J_F^2= (|f'|^2+1) \det\Big( d f^\trans d f-\frac{d f^\trans f'\otimes d f^\trans f'}{|f'|^2+1}\Big).
\eeqn
To compute the right-hand side of the previous equation, recall the following identity involving the determinant: if $\bA$ and $\bB$ are linear mappings between inner-product spaces such that the product $\bA^\trans\bB\bA$ makes sense, then
\beqn
\det(\bA^\trans\bB\bA)=\det(\bA^\trans\bA)\det(\bJ^\trans\bB\bJ),
\eeqn
where $\bJ$ is the natural injection of the range of $\bA$ into the domain of $\bB$.  Using this fact, we find that
\beqn\label{JF2}
\text{det}\Big[ d f^\trans d f-\frac{d f^\trans f'\otimes d f^\trans f'}{|f'|^2+1}\Big]=\det(d f^\trans d f)\det\Big(\Big[\textbf{1}_{ f}- \frac{|\bP_{ f} f'|^2}{|f'|^2+1} \be\otimes\be  \Big]\Big),
\eeqn
where $\textbf{1}_{ f}$ is the identity mapping on the range of $d f$, $\bP_{f}$ the projection of $\Real^n$ onto the range of $df$, and $\be:=\bP_f f'/|\bP_f f' |$.  Since $\be$ is a unit vector, we have
\beqn\label{JF3}
\det\Big(\textbf{1}_{ f}- \frac{|\bP_{ f} f'|^2}{|f'|^2+1} \be\otimes\be  \Big)= 1-\frac{|\bP_{ f} f'|^2}{|f'|^2+1}.
\eeqn
Notice that $|f'|^2=|\bP_f f'|^2+|\bn\cdot f'|^2$, and so putting \eqref{JF1}--\eqref{JF3} together yields
\beqn
J_F^2=|f'|^2+1-|\bP_ff'|^2 =V^2_{\partial}+1,
\eeqn
which establishes \eqref{JF0}.  Plugging this result into \eqref{JF-1} and using the area formula again results in
\begin{align}
\int_\cS \phi\, d \cH^m &= \int_{t_1}^t\int_{\cN} \phi(s,f(s,z))\sqrt{1+V^2_{\partial}(s,f_s(z))}J(f_s)(z)\, d\cH^{m-1}(z) ds\\
&= \int_{t_1}^t\int_{\partial^* \cO_s} \phi(s,p)\sqrt{1+V^2_{\partial}(s,p)}\, d\cH^{m-1}(p) ds.
\end{align}
This is the desired result.

\end{proof}


\section{Proof of the transport theorem}\label{sectTT}

Here the transport theorem for regularly evolving open sets is established.  Before this is done, let us recall that $\cE=\Real\times\cM$ is an $(m+1)$-dimensional submanifold of $\Real^{d+1}$.  Given a differentiable vector-field $\ba$ defined on an open subset of $\cE$, we denote the covariant gradient of $\ba$ by $\nabla_\cE\ba$.  
The associated divergence operator is defined by $\text{div}_\cE \ba:= \text{tr}(\nabla_\cE \ba)$.

The set $\cW$, which is an open subset of $\cE$, has finite-perimeter.  Thus, the divergence theorem can be applied.  Namely, if $\ba$ is a smooth tangential vector field defined on $\cW$, we have
\beqn
\int_\cW \text{div}_\cE(\ba) d\cH^m=\int_{\partial^* \cW} \ba\cdot\bw\, \cH^{m-1}.
\eeqn
This result plays an important role in establishing the desired transport theorem.  The fact that the divergence theorem can be used to prove transport theorems is due to Gurtin, Struthers, and Williams \cite{GSW}.

\begin{theorem}
Let $\cO$ be a regularly evolving open set with boundary velocity $V_{\partial}$ and let $\phi\in C^1(\Real,W^{1,1}(\cM,\Real))$.  It follows that
\beqn
\frac{d}{d t}\int_{\cO} \phi\, d\cH^{m}=\int_\cO  \phi'\, d\cH^m+ \int_{\partial^* \cO} \phi V_{\partial}\, d\cH^{m-1},
\eeqn
where prime denotes the partial derivative with respect to time.
\end{theorem}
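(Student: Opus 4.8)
The plan is to realize the time-dependent integral as a boundary flux of the fixed space-time region $\cW$ and to extract the formula from the divergence theorem recorded above. Fix $t_\circ<t$ and work with the associated sets $\cW$, $\cB$, $\cS$, $\cT$ and the normal $\bw$ from Lemma~\ref{lemw}. The decisive choice is the tangential vector field $\ba:=\phi\,\bt$ on $\cW$, where $\bt$ is the unit time-like vector. Since $\bt$ lies in the flat $\Real$-factor of the product manifold $\cE=\Real\times\cM$, it is covariantly constant there, so $\nabla_\cE(\phi\bt)=\bt\otimes\nabla_\cE\phi$ and hence $\text{div}_\cE(\phi\bt)=\nabla_\cE\phi\cdot\bt=\phi'$. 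Applying the divergence theorem on $\cW$ to this field therefore gives
\beqn
\int_\cW \phi'\,d\cH^{m+1}=\int_{\partial^*\cW}\phi\,(\bt\cdot\bw)\,d\cH^{m}.
\eeqn

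Next I would evaluate the boundary integral over the three pieces of $\partial^*\cW$ using the explicit normal \eqref{normalW}. On $\cT$ one has $\bw=\bt$ and $\bt\cdot\bt=1$, contributing $\int_{\cO_t}\phi(t,\cdot)\,d\cH^m$; on $\cB$ one has $\bw=-\bt$, contributing $-\int_{\cO_{t_\circ}}\phi(t_\circ,\cdot)\,d\cH^m$. On the side $\cS$, since $\bn$ is tangent to $\cM$ we have $\bt\cdot\bn=0$, so \eqref{normalW}$_2$ gives $\bt\cdot\bw=-V_{\partial}/\sqrt{1+V_{\partial}^2}$. The key point is that when Lemma~\ref{lemCV} is invoked to turn the integral over $\cS$ into an iterated integral, its Jacobian factor $\sqrt{1+V_{\partial}^2}$ cancels the denominator exactly, leaving
\beqn
\int_\cS\phi\,(\bt\cdot\bw)\,d\cH^m=-\int_{t_\circ}^t\int_{\partial^*\cO_s}\phi V_{\partial}\,d\cH^{m-1}\,ds.
\eeqn

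Assembling the three contributions and rewriting the left side by Fubini as $\int_{t_\circ}^t\int_{\cO_s}\phi'\,d\cH^m\,ds$, I would arrive at the integrated identity
\beqn
\int_{\cO_t}\phi(t,\cdot)\,d\cH^m-\int_{\cO_{t_\circ}}\phi(t_\circ,\cdot)\,d\cH^m=\int_{t_\circ}^t\Big(\int_{\cO_s}\phi'\,d\cH^m+\int_{\partial^*\cO_s}\phi V_{\partial}\,d\cH^{m-1}\Big)\,ds.
\eeqn
Writing $G(t):=\int_{\cO_t}\phi(t,\cdot)\,d\cH^m$, the left side is $G(t)-G(t_\circ)$; since $t_\circ$ is fixed and $t$ arbitrary, differentiating in $t$ by the fundamental theorem of calculus yields the asserted formula at every $t$, provided the $s$-integrand on the right is continuous.

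The main obstacle I anticipate is regularity, not algebra. The divergence theorem as stated applies to \emph{smooth} tangential fields, whereas $\phi\in C^1(\Real,W^{1,1}(\cM,\Real))$ is only Sobolev in space, so $\phi\bt$ need not be smooth. I would therefore approximate $\phi$ by fields smooth in both variables, apply the identity to each approximant, and pass to the limit; here the finite perimeter of $\cW$ is essential, since it controls the boundary traces appearing on $\partial^*\cW$ in terms of the $W^{1,1}$-norm, and this is precisely where the Gauss--Green machinery for sets of finite perimeter enters. A secondary point requiring care is the continuity in $s$ of the boundary term $\int_{\partial^*\cO_s}\phi V_{\partial}\,d\cH^{m-1}$, needed to justify the final differentiation; this should follow from the $C^1$-in-time hypotheses on $f$ and $\phi$ together with the uniqueness of $V_{\partial}$ established after \eqref{vel}.
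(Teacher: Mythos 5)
Your proposal is correct and follows essentially the same route as the paper: the same choice of field $\phi\bt$, the same use of the divergence theorem on $\cW$ combined with Lemma~\ref{lemw} for the normal and Lemma~\ref{lemCV} for the cancellation of the Jacobian factor on $\cS$, and the same final differentiation in $t$. Your closing remarks on approximating $\phi$ by smooth fields and on continuity of the boundary term in $s$ address regularity points the paper leaves implicit, but they do not change the argument.
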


\begin{proof}
Notice that since $\bt=(1,\textbf{0})$ is a purely time-like vector,
\beqn
\text{div}_\cE(\phi \bt) = \nabla_\cE \phi\cdot \bt = \phi' .
\eeqn
Applying the divergence theorem and then the coarea formula \cite{AFP} results in
\beqn\label{TT1}
\int_{\partial^* \cW} \phi \bt\cdot\bw\, d\cH^{m} = \int_\cW \text{div}_\cE(\phi \bt)\, d\cH^{m+1}=\int_\cW \phi' \, d\cH^{m+1} = \int_{t_\circ}^t \int_{\cO_s} \phi'(s,p) d\cH^m(p) ds.
\eeqn
Using Lemmas \ref{lemw} and \ref{lemCV}, the integral on the left-hand side of \eqref{TT1} can be computed as
\begin{align}
\nonumber\int_{\partial^* \cW} \phi \bt\cdot\bw\, d\cH^{m}&=-\int_\cB \phi \, d\cH^{m}-\int_\cS \frac{\phi V_{\partial }}{\sqrt{1+V^2_{\partial }}} \, d\cH^m+\int_\cT \phi \, d\cH^{m}\\
&\label{TT2}= -\int_{\cO_{t_\circ}}\phi\, d\cH^m-\int_{t_\circ}^t \int_{\partial^* \cO_s}\phi(s,p)V_\partial(s,p)\, d\cH^{m-1}(p) ds +\int_{\cO_{t}}\phi\, d\cH^m.
\end{align}
Substituting \eqref{TT2} into \eqref{TT1} and differentiating with respect to $t$ yields the result.

\end{proof}

\bibliographystyle{acm}
\bibliography{nonlength} 

\end{document}